\documentclass[a4paper,12pt]{article}
\usepackage{amsmath,amsthm, amssymb}

\usepackage{graphicx}

\newtheorem{theorem}{Theorem}[section]
\newtheorem{proposition}{Proposition}[section]

\newtheorem{remark}{Remark}[section]

\begin{document}
\title{Iwasawa invariants and exceptional pairs of certain abelian fields}
\author{\normalsize 
{\large 
Kazato {\sc Nada}, 
Haruma {\sc Sasaki}
and Hiroki {\sc Sumida-Takahashi}
\footnote{\noindent ORCID:0000-0003-3928-9481 The author was partially supported by JSPS KAKENHI Grant Number JP25K06934 and JP26K00600. }}
\\
\\
\normalsize \it Department of Mathematical Sciences, \\ 
\normalsize \it Tokushima University, \\
\normalsize \it Minamijosanjima-cho 2-1, Tokushima 770-8506, JAPAN\\
\normalsize \it e-mail address $:$ hirokit@tokushima-u.ac.jp\\
}
\date{}
\maketitle  
\begin{abstract}
\vspace{.3cm}
{\normalsize
Let $p$ be an odd prime number and $\zeta_p$ a primitive $p$-th root of unity. 
By computing special arithmetic elements modulo prime ideals, 
we have investigated the Iwasawa invariants and exceptional pairs 
$(p,\chi \omega_p^k)$ of $\mathbf{Q}(\sqrt{d},\zeta_p)$. 
Here $\chi$ denotes the Dirichlet character associated to 
$\mathbf{Q}(\sqrt{d})$, and $\omega_p$ the Teichm\"uller character at $p$. 
First, we determine $\varLambda$-isomorphism classes of Iwasawa modules, 
and describe structural differences between exceptional and 
non-exceptional pairs in terms of unramified extensions outside $p$. 
Next, we report 15 new exceptional pairs in the range $|d|<200$ (resp.~$|d|<10$) 
and $p <2,\!000,\!000$ (resp.~$p <30,\!000,\!000$), 
including $(p,k,d)=(28,\!679,\!999,\;2,\!284,\!521,-8)$ 
for which the $\chi \omega_p^{1-k}$-part of the Iwasawa $\lambda$-invariant 
is equal to two. 
\\[.3cm]   
\noindent
Key words: Iwasawa invariant, exceptional pair, unramified extension, ideal class group\\
2020 Mathematics Subject Classification: Primary 11R23; Secondary 11R18, 11R29, 11R70
}
\end{abstract}

\def\Gal{\mbox{\rm Gal}}
\def\ch{\mbox{\rm char$_\varLambda$}}
\def\rk{\mbox{\rm rank}}
\def\rnk{\mbox{\rm -rank}}
\def\Cok{\mbox{\rm Coker}}
\def\od{\mbox{\rm ord}}
\def\Kn{\mbox{\rm Ker}}
\def\Ig{\mbox{\rm Im}}
\def\md{\mbox{\rm \;mod }}
\def\Hom{\mbox{\rm Hom}}
\def\disp{\displaystyle}

\section{\large Introduction}
 
Let $\chi$ be either the trivial character or a primitive quadratic Dirichlet character of conductor $f=f_\chi$, 
and let $p$ be an odd prime number such that $p \nmid f$. 
We fix an embedding $\overline{\mathbf{Q}}$ to $\mathbf{C}_p$. 
Define $d=d_\chi=\chi(-1)f_\chi$, $K=\mathbf{Q}(\sqrt{d},\zeta_{p})$, 
and $K_n=\mathbf{Q}(\sqrt{d},\zeta_{p^{n+1}})$, where $\zeta_m$ denotes a primitive $m$-th root of unity. 
Let $A_n$ be the $p$-part of the ideal class group of $K_n$. 

Set $K_\infty=\cup_{n \geq 0} K_n$, 
$\Gamma=\Gal(K_\infty/K)$, $\Delta=\Gal(K_\infty/\mathbf{Q}_\infty)
\simeq \Gal(K_0/\mathbf{Q})$ and define 
$e_{\varphi}=\frac{1}{\sharp \Delta} \sum_{\delta \in \Delta}
\varphi(\delta)\delta^{-1}$ for a character $\varphi$ of $\Delta$. 

For a $\mathbf{Z}_p[\Delta]$-module $A$, We denote by $A^\varphi$ the $\varphi$-part $e_\varphi A$. 
Let $\lambda_p(\varphi)$, $\mu_p(\varphi)$ and $\nu_p(\varphi)$ 
be the Iwasawa invariants associated to $A_n^{\varphi}$, i.e., 
$$
\sharp A_n^{\varphi}=p^{\lambda_p(\varphi) n+\mu_p(\varphi) p^n+\nu_p(\varphi)}
$$
for sufficiently large $n$. 
By Ferrero-Washington's theorem, we have $\mu_p(\varphi)=0$ for all $p$ and $\varphi$. 

We put $f_0=fp$ and identify $\Delta$ with a subquotient of 
$(\mathbf{Z}/ f_0 \mathbf{Z})^\times$ in the usual way. 
Then, we can identify $\varphi$ with a $\mathbf{Q}_p$-valued Dirichlet character. 
In the following, we assume that a character $\psi$ of $\Delta$ 
is identified with an even primitive Dirichlet character of conductor $f_0$. 
The Iwasawa polynomial $g_{\psi}(T) 
\in \mathbf{Z}_p[T]$ for the $p$-adic $L$-function is defined as follows. 
Let $L_p(s,\psi)$ be the $p$-adic $L$-function constructed 
by \cite{Kubota-Leopoldt}. 
By \cite[\S6]{Iwasawa2}, 
there uniquely exists a unique power series $G_{\psi}(T) \in \mathbf{Z}_p[[T]]$ 
satisfying 
$$
G_{{\psi}}((1+f_0)^{1-s}-1)=L_p(s,{\psi})
$$
for all $s \in \mathbf{Z}_p$ with $\psi \neq \chi^0$. 
By \cite{Ferrero-Washington}, $p$ does not divide $G_{\psi}(T)$. 
By the $p$-adic Weierstrass preparation theorem, 
we can uniquely write $G_{\psi}(T)=g_{\psi}(T)u_{\psi}(T)$, 
where $g_{\psi}(T)$ is a distinguished polynomial 
of $\mathbf{Z}_p[T]$ 
and $u_{\psi}(T)$ is an invertible element of $\mathbf{Z}_p[[T]]$. 
Similarly we can define $g^*_\psi(T) \in \mathbf{Z}_p[T]$ 
from $G^*_{\psi}(T) \in \mathbf{Z}_p[[T]]$
satisfying 
$$
G^*_{\psi}((1+f_0)^{s}-1)=L_p(s,{\psi}).
$$
Put 
$$
\tilde{\lambda}_p(\psi)=\deg g_{\psi}(T)=\deg g^*_{\psi}(T). 
$$
Put $f_n=f_0 p^n$ 
and let 
$\gamma \in \Gamma  \simeq \Gal(\cup_{n \geq 0} \mathbf{Q}(\zeta_{f_n})/\mathbf{Q}(\zeta_{f_0})) $ 
be the generator of $\Gamma$ such that 
$\zeta_{f_n}^{\tilde{\gamma}}=\zeta_{f_n}^{1+f_0}$ for all $n \ge 0$. 
As usual, we can identify the complete group ring 
$\mathbf{Z}_p[[\Gamma]]$ with the formal power series ring 
$\varLambda=\mathbf{Z}_p[[T]]$ by $\gamma =1+T$. 
By this identification, we can consider a 
$\mathbf{Z}_p[[\Gamma]]$-module 
as a $\varLambda$-module. 
For a finitely generated torsion $\varLambda$-module $A$, 
we define the Iwasawa polynomial $\ch(A)$ 
to be the characteristic polynomial of the 
action $T$ on $A \otimes \mathbf{Q}_p$ 
(cf.~\cite[\S 13]{Washington}). 
Let $L_n$ be the maximal unramified abelian extension of $K_n$ 
and $M_n$ the maximal abelian extension of $K_n$ unramified outside $p$. 
By the class field theory, we have 
$
A_n \simeq \Gal(L_n/K_n). 
$
Set 
$L_\infty=\cup_{n\geq 0} L_n$, 
$M_\infty=\cup_{n\geq 0} M_n$, 
$X_\infty=\Gal(L_\infty/K_\infty)$ and 
$Y_\infty=\Gal(M_\infty/K_\infty)$. 
By the Iwasawa main conjecture proved by \cite{Greither, Mazur-Wiles}, 
$$
\ch(X_\infty^{\psi^{-1}\omega})=g_{\psi}^*(T)\quad {\rm and }\quad \ch(Y_\infty^{\psi})=g_{\psi}(T).
$$ 

In the following, we assume that
$$
\psi=\chi \omega^k {\rm \; is\; even,\; and \;} \psi^*=\psi^{-1} \omega=\chi \omega^{p-k} {\rm \; is \; odd}
$$
with $2 \leq k \leq p-2$. Since $p$ does not divide $f$,  
$$
\mbox{(C)} \qquad\qquad\qquad 
\psi(p) \neq 1 \mbox{ and } \psi^*(p) \neq 1. \qquad\qquad 
$$
By (C), we have that 
$A_n^\psi \simeq X_\infty^\psi/\omega_n X_\infty^\psi$ 
and 
$A_n^{\psi^*} \simeq X_\infty^{\psi^*}/\omega_n X_\infty^{\psi^*}$, 
where $\omega_n=(1+T)^{p^n}-1$ (cf.~\cite[Lemma 3 and Remark 4]{Ichimura-Sumida2}). 
Moreover, if $A_0^\psi$ is trivial, we have $\lambda_p(\psi)=\nu_p(\psi)=0$, 
$
X_\infty^\psi=\{0\}, \quad 
Y_\infty^\psi \simeq \varLambda/(g_\psi(T)) \quad 
{\rm and }\quad 
X_\infty^{\psi^*} \simeq \varLambda/(g^*_\psi(T)). 
$
Put 
$$
a_0=a_0(\psi)=L_p(1,\psi)=G_{\psi}(0)\quad {\rm  and } \quad 
b_0=b_0(\psi)=L_p(0,\psi)=G_{\psi}^*(0).
$$ 
Note that $v_p(a_0)=v_p(\sharp \Gal(M_0/K_0)^\psi)$ and $v_p(b_0)=v_p(\sharp \Gal(L_0/K_0)^{\psi^*})$. 

We call $(p,\chi\omega^k)$ an exceptional pair when one of the following conditions holds: 
$$
[\nu]:\; A_0^{\chi \omega^k} \neq \{0\}, \;\; [a_0]:\; v_p(a_0)>1, \;\; 
[b_0]:\; v_p(b_0)>1\;\; {\rm  or}\;\; {\rm [lmd]: }\; \tilde{\lambda}_p(\chi \omega^k)>1.
$$ 

Many authors, including Kummer, Vandiver, D.H. Lehmer, E. Lehmer, Selfridge, Nicol, Pollack, 
Johnson, Wada, Wagstaff, Tanner, Ernvall, Mets\"ankyl\"a, Buhler, Crandall, Sompolski, 
Shokrollahi, Hart, Harvey and Ong, have computed irregular pairs, 
and verified that no exceptional pair exists for $d=1$ and $p<2^{31}=2,\!147,\!483,\!648$ 
(cf.~\cite{Buhler1, Buhler2, Hart}). 

In \cite{Sumida6, Sumida7, Sumida8, Sumida10, Sumida12}, 
by computing special arithmetic elements modulo prime ideals, 
we computed irregular and exceptional pairs in the range $|d|<200$ (resp.~$|d|<10$) 
and $p<1,\!000,\!000$ (resp.~$p <20,\!000,\!000$). 
These data are useful for computing the $p$-part of the ideal class group of $K_n$ 
and higher $K$-groups of the integer ring of $\mathbf{Q}(\sqrt{d})$, 
and for verifying generalized Greenberg's conjecture for $\mathbf{Q}(\sqrt{d}, \zeta_p)$ (cf.~\cite{Sumida13}). 

This paper is organized as follows.
In Section 2, we determine $\varLambda$-isomorphism classes of Iwasawa modules, 
and describe structural differences between exceptional and non-exceptional pairs 
in terms of unramified extensions outside $p$. 
In Section 3, we review algorithms for the computation, 
and report new exceptional pairs obtained by further computation 
in the range $|d|<200$ (resp.~$|d|<10$) 
and $p<2,\!000,\!000$ (resp.~$p <30,\!000,\!000$), 
including $(p,k,d)=(28,\!679,\!999,\;2,\!284,\!521,-8)$ with $\lambda_p(\chi \omega_p^{1-k})=2$. 

\section{\large Unramified abelian extensions outside $p$}

We use the notation in the previous section and assume that $\psi=\chi \omega^k$ satisfies (C). 
Let $D_n$ be the subgroup of $A_n$ which is generated by the class of prime ideals lying above $p$ 
and put $A_n'=A_n/D_n$. Let $E_n$ be the unit group of $K_n$ and $E'_n$ the $p$-unit group of $K_n$.  
By (C), $D_n^\psi=D_n^{\psi^*}=\{0\}$, $A_n^\psi \simeq {A'_n}^\psi$, $A_n^{\psi^*} \simeq {A'_n}^{\psi^*}$ and 
$H^1(K_n/K,E_n)^\psi \simeq H^1(K_n/K,E'_n)^\psi$,where $\psi^*= \psi^{-1} \omega=\chi \omega^{1-k}$. 

In the following, we determine the $\varLambda$-isomorphism classes of Iwasawa modules by 
the Iwasawa main conjecture and \cite[Theorem 12, 17, 18 and Lemma 12]{Iwasawa3}. 
Note that (C) is (A) in \cite{Ichimura-Sumida2}. 
In \cite{Sumida}, we define the set of $\varLambda$-isomorphism classes for a distinguished polynomial 
$f(T) \in \varLambda$:
$$
\mathcal{M}_{f(T)}=\{[M]\;|\; {\rm char}_\varLambda(M)=f(T), 
\;\; M\;{\rm has \; no \; nontrivial \; finite}\; \varLambda-{\rm submodule}\}
$$
and show that $\sharp \mathcal{M}_{f(T)}$ is finite if and only if $f(T)$ is square-free. 
If $(p,\psi)$ does not satisfy $[\nu]$, it is easy to determine the 
$\varLambda$-isomorphism classes of Iwasawa modules, because they are cyclic $\varLambda$-modules. 
If $(p,\psi)$ satisfies $[\nu]$, it is not easy to determine the 
$\varLambda$-isomorphism classes of Iwasawa modules especially when $\deg f(T) \geq 3$ in general. 
When $\deg f(T)=1$ or $f(T)$ is an Eisenstein polynomial, $\sharp \mathcal{M}_{f(T)}=1.$ 
When $\deg f(T)=2$, $\mathcal{M}_{f(T)}$ is explicitly determined in \cite{Koike,Sumida}. 
However, when $\deg f(T) \ge 3$, $\mathcal{M}_{f(T)}$ is not explicitly determined 
except for some cases (cf.~\cite{Franks, Murakami0, Murakami1, Murakami2, Sumida}). 

First, if $(p,\chi \omega^k)$ is a regular pair, we have 
$v_p(a_0(\psi))=0$, $v_p(b_0(\psi))=0$ and 
$\tilde{\lambda}_p(\psi)=\lambda_p(\psi^*)=\nu_p(\psi)=\nu_p(\psi^*)=0$. 
Then, we have the following simple $\varLambda$-module structures of Galois groups. 

\noindent
{\bf Case } $[{\rm R}]$ \\
$$
\begin{array}{lll}
X_\infty^{\psi}=\{0\}, & & Y_\infty^{\psi}=\{0\}, \\
X_\infty^{\psi^*}=\{0\} & \;{\rm and}\; & Y_\infty^{\psi^*} \simeq \varLambda. \\
\end{array}
$$

Next, we assume that $(p,\chi \omega^k)$ is an irregular pair. 
Then, $v_p(a_0(\psi))>0$, $v_p(b_0(\psi))>0$ and $\tilde{\lambda}_p(\psi)=\lambda_p(\psi^*)>0$, 
where $\psi^*=\chi \omega^{1-k}$. 

If $(p,\psi)$ is not an exceptional pair, 

\noindent
{\bf Case } $[{\rm nE}]$ \\
$$
\begin{array}{lll}
X_\infty^{\psi}=\{0\}, & & Y_\infty^{\psi} \simeq \varLambda/(T-\alpha), \\
X_\infty^{\psi^*} \simeq \varLambda/(T-\alpha^*) & \;{\rm and}\; & 
Y_\infty^{\psi^*} \simeq \varLambda \\
\end{array}
$$
for some $\alpha$, $\alpha^* \in p \mathbf{Z}_p \setminus p^2 \mathbf{Z}_p$. 

Finally, we consider four cases that $(p,\psi)$ satisfies only one of conditions 
$[\nu]$, $[a_0]$, $[b_0]$ and $[{\rm lmd}]$, which is usual when $p$ is large. \\

\noindent
{\bf Case } $[\nu]$ \\
\quad (i) If Greenberg's conjecture is true for $p$ and $\psi$, 
$$
\begin{array}{lll}
X_\infty^{\psi} \simeq \varLambda/(T-\alpha, p^{\nu}), & &Y_\infty^{\psi} \simeq \varLambda/(T-\alpha), \\
X_\infty^{\psi^*} \simeq \varLambda/(T-\alpha^*) & 
\;{\rm and}\; & Y_\infty^{\psi^*} \simeq (T,p) \subset \varLambda \\
\end{array}
$$
\quad for some $\alpha$, $\alpha^* \in p \mathbf{Z}_p \setminus p^2 \mathbf{Z}_p$ 
and $\nu \in \mathbf{Z}_{\geq 1}$. \\
\quad (ii) If Greenberg's conjecture is false for $p$ and $\psi$, 
$$
\begin{array}{lll}
X_\infty^{\psi} \simeq \varLambda/(T-\alpha), & & Y_\infty^{\psi} \simeq \varLambda/(T-\alpha), \\
X_\infty^{\psi^*} \simeq \varLambda/(T-\alpha^*) & \;{\rm and}\; & 
Y_\infty^{\psi^*} \simeq \varLambda/(T-\alpha^*)\oplus \varLambda \\
\end{array}
$$
\quad for some $\alpha$, $\alpha^* \in p \mathbf{Z}_p \setminus p^2 \mathbf{Z}_p$. \\

\noindent
{\bf Case } $[a_0]$ \\
$$
\begin{array}{lll}
X_\infty^{\psi}=\{0\}, & & Y_\infty^{\psi} \simeq \varLambda/(T-\alpha), \\
X_\infty^{\psi^*} \simeq \varLambda/(T-\alpha^*) & \;{\rm and}\; & 
Y_\infty^{\psi^*} \simeq \varLambda \\
\end{array}
$$
for some $\alpha \in p^2 \mathbf{Z}_p$ , $\alpha^* \in p \mathbf{Z}_p \setminus p^2 \mathbf{Z}_p$. \\

\noindent
{\bf Case } $[b_0]$ \\
$$
\begin{array}{lll}
X_\infty^{\psi}=\{0\}, & & Y_\infty^{\psi} \simeq \varLambda/(T-\alpha), \\
X_\infty^{\psi^*} \simeq \varLambda/(T-\alpha^*) & \;{\rm and}\; & 
Y_\infty^{\psi^*} \simeq \varLambda \\
\end{array}
$$
for some $\alpha \in p \mathbf{Z}_p\setminus p^2 \mathbf{Z}_p$ , $\alpha^* \in p^2 \mathbf{Z}_p$. \\

\noindent
{\bf Case } $[{\rm lmd}]$ \\
$$
\begin{array}{lll}
X_\infty^{\psi}=\{0\}, & & Y_\infty^{\psi} \simeq \varLambda/(g(T)), \\
X_\infty^{\psi^*} \simeq \varLambda/(g^*(T)) & \;{\rm and}\; & 
Y_\infty^{\psi^*} \simeq \varLambda \\
\end{array}
$$
for some distinguished polynomials $g(T)$, $g^*(T) \in \mathbf{Z}_p[T]$ 
such that $g(0)$, $g^*(0) \in p \mathbf{Z}_p\setminus p^2 \mathbf{Z}_p$, 
$\deg g(T)=\deg g^*(T) \geq 2$. \\

From the above $\varLambda$-structures, we obtain the following structural differences 
of Galois groups over finite extensions in each case. 

\begin{theorem}
Assume that $\psi$ satisfies {\rm (C)} and that 
$(p,\psi)$ is an irregular pair.
In {\rm Case} $[\nu]$, there is a unramified abelian $p$-extension $L'_n$ of $K_n$ such that 
$\Gal(L'_n/K_n)^{\psi}$ is not trivial for $n \geq 0$. 
In {\rm Case} $[a_0]$, there is an unramified abelian $p$-extension $M'_n$ of $K_n$ outside $p$ 
such that the exponent of $\Gal(M'_n/K_n)^{\psi}$ is larger than $p^{n+1}$ for $n \geq 0$. 
In {\rm Case} $[b_0]$, there is an unramified abelian $p$-extension $L'_n$ of $K_n$ 
such that the exponent of $\Gal(L'_n/K_n)^{\psi^*}$ is larger than $p^{n+1}$ for $n \geq 0$. 
In {\rm Case} $[{\rm lmd}]$, there is an unramified abelian $p$-extension $L'_n$ of $K_n$ 
such that the $p$-rank of $\Gal(L'_n/K_n)^{\psi^*}$ is larger than one for $n \geq 1$. 
On the other hand, those extensions do not exist if $(p,\psi)$ is not an exceptional pair.  
\label{thm:1}
\end{theorem}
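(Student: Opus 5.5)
The plan is to read everything off the $\varLambda$-structures listed above. For each case, take $L'_n$ (resp.\ $M'_n$) to be the subfield of $L_n$ (resp.\ $M_n$) fixed by the complement of the relevant $\varphi$-part. Then compute $\Gal(L_n/K_n)^{\varphi}\simeq A_n^\varphi \simeq X_\infty^{\varphi}/\omega_n X_\infty^{\varphi}$ using (C), and the analogous coinvariant description for $\Gal(M_n/K_n)^\psi$. For the latter we use that, under (C), $\Gal(M_n/K_n)^\psi \simeq Y_\infty^\psi/\omega_n Y_\infty^\psi$, because the $\psi$-part of $\Gal(K_\infty/K_n)$ vanishes for $\psi\neq 1$. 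I would justify this by the same argument as in \cite[Lemma 3 and Remark 4]{Ichimura-Sumida2}.

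The key computation is $\varLambda/(T-\alpha,\omega_n)\simeq \mathbf{Z}_p/\omega_n(\alpha)\mathbf{Z}_p$, where $\omega_n(\alpha)=(1+\alpha)^{p^n}-1$. For $v_p(\alpha)=e\ge1$ we have $v_p(\omega_n(\alpha))=e+n$. This is standard: $(1+\alpha)^{p}-1 = p\alpha(1+O(\alpha))$ for odd $p$, then induct on $n$. We then go through the cases.

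\textbf{Case $[\nu]$.} Under (i), $X_\infty^\psi/\omega_n\simeq \mathbf{Z}/p^{\min(\nu,n+1)}\neq0$. Under (ii), it is $\mathbf{Z}/p^{n+1}\neq0$. Either way $A_n^\psi\neq0$ for all $n\ge0$. For a non-exceptional irregular pair (Case $[\mathrm{nE}]$), $X_\infty^\psi=0$, so $A_n^\psi=0$.

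\textbf{Case $[a_0]$.} Here $Y_\infty^\psi/\omega_n\simeq\mathbf{Z}/p^{v_p(\alpha)+n}$ with $v_p(\alpha)\ge2$, so the exponent is at least $p^{n+2}>p^{n+1}$. In Case $[\mathrm{nE}]$ the exponent is exactly $p^{n+1}$.

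\textbf{Case $[b_0]$.} The same computation applies to $X_\infty^{\psi^*}\simeq\varLambda/(T-\alpha^*)$ with $v_p(\alpha^*)\ge2$, giving exponent at least $p^{n+2}$. In $[\mathrm{nE}]$ it is exactly $p^{n+1}$.

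\textbf{Case $[\mathrm{lmd}]$.} We need $\dim_{\mathbf{F}_p} A_n^{\psi^*}/p = \dim \varLambda/(g^*,\omega_n,p)$. Since $g^*$ is distinguished of degree $\lambda\ge2$, we have $g^*\equiv T^\lambda$ and $\omega_n\equiv T^{p^n}$ modulo $p$. Hence the quotient is $\mathbf{F}_p[T]/(T^{\min(\lambda,p^n)})$, of dimension $\min(\lambda,p^n)\ge2$ for $n\ge1$. For $n=0$ it is $1$, which explains the restriction $n\ge1$. In $[\mathrm{nE}]$ we have $\lambda=1$, so the $p$-rank is at most one.

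For the final sentence, note that "those extensions" means extensions with the stated property inside $L_n$ or $M_n$. Any such extension is a subextension of $L_n$ or $M_n$, so its $\varphi$-part Galois group is a quotient of $A_n^\varphi$ or $\Gal(M_n/K_n)^\psi$. The bounds computed in Case $[\mathrm{nE}]$ then rule them out.

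The main obstacle I expect is the identification $\Gal(M_n/K_n)^\psi\simeq Y_\infty^\psi/\omega_nY_\infty^\psi$. It requires checking that there is no finite kernel or cokernel, using (C) and the vanishing of the $\psi$-part of the local units' torsion. I would try to deduce it via $v_p(\sharp\Gal(M_0/K_0)^\psi)=v_p(a_0)$ together with Iwasawa's results in \cite{Iwasawa3}. A secondary point is the precise valuation $v_p(\omega_n(\alpha))=v_p(\alpha)+n$, which needs $p$ odd.
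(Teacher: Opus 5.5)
Your proposal is correct and follows the same route as the paper, which likewise reads each assertion off the listed $\varLambda$-structures via the $\omega_n$-coinvariants. Your explicit computations supply the details the paper leaves implicit: $\varLambda/(T-\alpha,\omega_n)\simeq\mathbf{Z}/p^{v_p(\alpha)+n}\mathbf{Z}$, the rank count $\varLambda/(g^*,\omega_n,p)\simeq\mathbf{F}_p[T]/(T^{\min(\lambda,p^n)})$, and the quotient argument against Case $[{\rm nE}]$ for the converse. The step you flag as the main obstacle is in fact immediate: $M_n\subset M_\infty$, and $\Gal(M_n/K_n)$ is the abelianization of the split extension of $\Gal(K_\infty/K_n)\simeq\mathbf{Z}_p$ by $Y_\infty$, so $\Gal(M_n/K_n)^\psi\simeq Y_\infty^\psi/\omega_nY_\infty^\psi$ exactly for $\psi\neq 1$, with no finite kernel or cokernel to control.
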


\begin{proof}
Put $X'=\Gal(L_\infty/K_\infty)^{\psi}$ and $Y'=\Gal(M_\infty/K_\infty)^{\psi^*}$. 
Then, the subgroup of $X'$(resp.~$Y'$) which corresponds to the maximal abelian extension of 
$K_n$ is $\omega_n X'$(resp.~$\omega_n Y'$). 
Since $X' \simeq \Gal(L_n/K_n)^\psi$ and $Y' \simeq \Gal(M_n/K_n)^{\psi^*}$, 
the $\varLambda$-structures of $X'$ and $Y'$ imply the assertions. 
\end{proof}

In the above theorem, we use the $\Delta$-decomposition of Galois groups. 
Let us give another statement which characterizes Case $[{\rm lmd}]$. 
We consider the following two typical cases with $X_\infty \simeq \mathbf{Z}_p^\lambda$. 

\noindent
{\bf Case} $[{\rm lmd} \times {\rm 1}]$
$$
X_\infty^{\psi^*} \simeq \varLambda/(g^*(T)) \quad ({\rm Case\; [{\rm lmd}]}) 
\quad {\rm and } \quad  X_\infty = X_\infty^{\omega^{1-k}}. 
$$

\noindent
{\bf Case} $[{\rm nE}\times \lambda]$
$$
X_\infty^{\chi \omega^{1-k_i}} \simeq \varLambda/(T-\alpha_i^*)\quad ({\rm Case\; [{\rm nE}]}) 
\quad {\rm and } \quad  X_\infty = \bigoplus_{i=1}^\lambda X_\infty^{\omega^{1-k_i}}. 
$$

\begin{theorem}
In {\rm Case} $[{\rm lmd} \times 1]$, there is no unramified abelian $p$-extension $L'_n$ of $K_n$ 
such that 
$$
\Gal(L'_n/K_n)
\simeq (\mathbf{Z}/p^{a_1}\mathbf{Z}) \oplus (\mathbf{Z}/p^{a_2}\mathbf{Z})\oplus
 \cdots \oplus (\mathbf{Z}/p^{a_r}\mathbf{Z})
$$ 
with $r \leq \lambda$ and $a_1 \leq a_2 \leq \ldots \leq a_r$ 
such that $a_r -a_1 \geq 2$ and that $L_n'$ is a Galois extension of $K_0$ for $n \geq 0$. 
On the other hand, in {\rm Case} $[{\rm nE} \times \lambda]$, there exists such an unramified abelian 
extension of $K_n$ which is a Galois extension of $K_0$  for $n \geq 1$.
\label{thm:2}
\end{theorem}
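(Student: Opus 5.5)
The plan is to translate ``unramified abelian $p$-extension $L'_n/K_n$ which is Galois over $K_0$'' into module theory. By class field theory such an $L'_n$ corresponds to a quotient $A_n/B$, where $B=\Gal(L_n/L'_n)$. The extension $L'_n/K_0$ is Galois exactly when $B$ is stable under $\Gal(K_n/K_0)$. Hence $B$ must be a $\varLambda[\Delta]$-submodule, and $\Gal(L'_n/K_n)$ ranges over the $\varLambda[\Delta]$-quotients of $A_n$. In both cases $X_\infty$ is concentrated in odd components $\chi\omega^{1-k_i}$ satisfying (C). Therefore $A_n\simeq X_\infty/\omega_n X_\infty$ component by component, as recalled in Section 1 (cf.~\cite[Lemma 3 and Remark 4]{Ichimura-Sumida2}). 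So the task reduces to listing the group structures of the $\varLambda$-quotients of $X_\infty/\omega_nX_\infty$.

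\textbf{Case $[{\rm lmd}\times 1]$.} Here $X_\infty\simeq\varLambda/(g^*(T))$ with $\deg g^*=\lambda\ge 2$ and $g^*(0)\in p\mathbf{Z}_p\setminus p^2\mathbf{Z}_p$. So $g^*$ is an Eisenstein polynomial, and $\varLambda/(g^*)\simeq\mathcal{O}=\mathbf{Z}_p[\pi]$ with $\pi$ a root of $g^*$ and $T$ acting as multiplication by $\pi$. The ring $\mathcal{O}$ is a totally ramified DVR of degree $\lambda$ over $\mathbf{Z}_p$. Then $A_n\simeq\mathcal{O}/(\omega_n(\pi))$, whose $\varLambda$-submodules are exactly the ideals $\pi^m\mathcal{O}/(\omega_n(\pi))$. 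Hence every $\varLambda$-quotient is $\mathcal{O}/\pi^m$ for some $m$. Writing $m=q\lambda+s$ with $0\le s<\lambda$ and using the $\mathbf{Z}_p$-basis $1,\pi,\dots,\pi^{\lambda-1}$ together with $\pi^\lambda\sim p$, we get
$$
\mathcal{O}/\pi^m\mathcal{O}\simeq(\mathbf{Z}/p^{q+1}\mathbf{Z})^{s}\oplus(\mathbf{Z}/p^{q}\mathbf{Z})^{\lambda-s}.
$$
It has at most $\lambda$ nonzero cyclic factors, and their exponents differ by at most $1$. Thus no Galois-over-$K_0$ quotient can have $a_r-a_1\ge 2$, which gives the first assertion.

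\textbf{Case $[{\rm nE}\times\lambda]$.} Here $X_\infty=\bigoplus_i\varLambda/(T-\alpha_i^*)$ with $v_p(\alpha_i^*)=1$. Each summand lies in a distinct $\Delta$-eigencomponent. Since $v_p(\omega_n(\alpha_i^*))=v_p((1+\alpha_i^*)^{p^n}-1)=n+1$, we get $A_n\simeq\bigoplus_{i=1}^{\lambda}\mathbf{Z}/p^{n+1}\mathbf{Z}$. On each summand $T$ acts by the scalar $\alpha_i^*$ and $\Delta$ acts by a character. Therefore every subgroup of the form $\bigoplus_i p^{a_i}(\mathbf{Z}/p^{n+1})$ is a $\varLambda[\Delta]$-submodule. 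Its quotient $\bigoplus_i\mathbf{Z}/p^{a_i}\mathbf{Z}$ with arbitrary $0\le a_i\le n+1$ gives a Galois-over-$K_0$ extension $L'_n$. Choosing the exponents with $a_r-a_1\ge2$ (for instance $a_r=n+1$ and $a_1$ small) produces the required extension.

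The main obstacle is the bookkeeping at small $n$ in this second case. If the $a_i$ must be positive, the difference $n+1-a_1\ge 2$ with $a_1\ge1$ forces $n\ge2$. For $n=1$ one must either allow a trivial factor ($a_1=0$ with $r\le\lambda$) or use the hypothesis $\lambda\ge2$ differently. I would first check which convention the statement intends and adjust the threshold on $n$ accordingly. The structural part of the argument is the DVR description of $\varLambda/(g^*)$ in the first case: it shows that the Galois condition forces the ``balanced'' shape of $\mathcal{O}/\pi^m$.
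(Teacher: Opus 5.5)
Your route is the paper's: Galois over $K_0$ means the fixing subgroup is a $\varLambda$-submodule, and every $\varLambda$-quotient of $\varLambda/(g^*(T))$ is computed to be $(\mathbf{Z}/p^{q+1}\mathbf{Z})^{s}\oplus(\mathbf{Z}/p^{q}\mathbf{Z})^{\lambda-s}$. In Case $[{\rm nE}\times\lambda]$ both of you use the diagonal submodules $\langle p^{a_1}e_1,\dots,p^{a_\lambda}e_\lambda\rangle$. The only difference is packaging. The paper shows by hand that every ideal containing $g^*(T)$ has the form $(g^*(T),p^{s-1}T^{b},p^{s})$. You note instead that $\varLambda/(g^*(T))\simeq\mathbf{Z}_p[\pi]$ is a DVR with uniformizer the image of $T$, so these ideals are $\pi^{\lambda(s-1)+b}\mathbf{Z}_p[\pi]$. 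That is the same computation, done more cleanly.

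The point you left open has to be resolved, and the statement itself resolves it. In both cases $A_n$ has $p$-rank at most $\lambda$. So if all $a_i\ge 1$, the hypothesis $r\le\lambda$ would be vacuous; it only has content when zero exponents are allowed. The paper indeed takes $a_i\in\mathbf{Z}_{\ge 0}$ and uses $\omega_1M=p^2M$ to get $n\ge 1$. For $n=1$ this means $a_1=0$, $a_2=2$, i.e.\ the quotient $\mathbf{Z}/p^2\mathbf{Z}$ cut out by a single eigencomponent.

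With this convention, your summary ``at most $\lambda$ nonzero cyclic factors whose exponents differ by at most one'' no longer suffices for the non-existence part. Padding with a zero summand could produce $a_r-a_1\ge 2$. What you need, and what your formula gives at once, is the following. For $q\ge 1$ the quotient has exactly $\lambda$ nonzero cyclic factors, so adding a zero summand would force $r=\lambda+1$. For $q=0$ the quotient is $(\mathbf{Z}/p\mathbf{Z})^{s}$, and padding gives only $a_r-a_1\le 1$. This is precisely where $r\le\lambda$ enters.

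Two small corrections. First, stability under $\Gal(K_n/K_0)$ makes $B$ a $\varLambda$-submodule, not a $\varLambda[\Delta]$-submodule; the latter would mean Galois over $\mathbf{Q}$. This is harmless here: in the first case $\Delta$ acts by scalars on the single eigencomponent, and in the second case your submodule is $\Delta$-stable anyway. Second, some eigencomponents fail (C): the trivial one, and $\chi$ if $\chi(p)=1$. For these, the vanishing of $A_n^{\varphi}$ does not follow from (C). It follows from the surjection $X_\infty\to A_n$, which holds because the primes above $p$ are totally ramified in $K_\infty/K_n$.
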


\begin{proof} 
Put $X'=\Gal(L_\infty/K_\infty)^{\psi^*}$. 
Then, the subgroup of $X'$ which corresponds to the maximal abelian extension of $K_n$ is $\omega_n X'$. 
Further, subgroups of $X'$ which correspond to Galois extensions of $K_0$ 
are $\varLambda$-submodules of $X'$. 
We show that a $\varLambda$-submodule $M'$ of $M=\varLambda/(g^*(T))$ is determined by its index 
when $g^*(T)$ is an Eisenstein polynomial, along the proof in \cite[Theorem 2.4]{Sumida11}. 
Put $\lambda=\deg g^*(T) \geq 2$. 
Let $h(T) \in  \varLambda \setminus (g^*(T))$. 
By $p$-adic Weierstrass preparation theorem, $h(T) \equiv p^{s'-1} u(T) f(T) \mod (g^*(T))$, 
where $s' \geq 1$, $u(T) \in \varLambda^\times$, and $f(T)$ is a distinguished polynomial of degree $b' < \lambda$. 
Then, we have 
$(g^*(T),h(T)) =(g^*(T),p^{s'-1} f(T))=(g^*(T),p^{s'-1}f(T),p^{s'-1}g^*(T)-p^{s'-1}T^{\lambda-b'}f(T)) 
=(g^*(T),p^{s'-1}T^{b'},p^{s'})$. 
Let $s$ be the minimum number of such $s'$ contained in $M'$ modulo $(g^*(T))$. 
Further let $b$ be the minimum number of such $b'$ with $s'=s$ contained in $M'$ modulo $(g^*(T))$. 
$M'$ is determined by $s$ and $b$, i.e., $|M/M'|=p^{\lambda (s-1)+b}$. Then, as an abelian group, 
$$
M/M' 
\simeq (\mathbf{Z}/p^{(s-1)}\mathbf{Z})^{\lambda-b} \oplus (\mathbf{Z}/p^{s}\mathbf{Z})^b. 
$$ 
This implies the non-existence of a Galois extension of $K_0$ which satisfies the condition 
in the theorem for $n \geq 0$. 

On the other hand, for $M=\bigoplus_{i=1}^\lambda \varLambda/(T-\alpha_i^*)$, 
$\langle p^{a_1} e_1, p^{a_2} e_2, \ldots, p^{a_\lambda} e_\lambda \rangle$ 
is a $\varLambda$-submodule of $M$ for any $a_i \in \mathbf{Z}_{\geq 0}$, 
where $e_1=(1,0,0, \ldots,0)$, $e_2=(0,1,0,\ldots,0)$, $\ldots$, $e_\lambda=(0,0,\ldots,0,1)$. 
Since $\omega_1 M=p^2 M$, this implies the existence of a Galois extension of $K_0$ which satisfies 
the condition in the theorem for $n \geq 1$. 
\end{proof}

\begin{remark}
When $f(T)$ is an Eisenstein polynomial, the $\varLambda$-isomorphism classes of Galois groups 
are similar to Case $[\nu]$. 
By the above proof, there is only one submodule $(f(T),p^{s-1}T^b,p^s)/(f(T))$ in 
$\varLambda/(f(T))$ of index $p^\nu=p^{\tilde{\lambda}(s-1)+b}$. 

\noindent
{\bf Case } $[\nu]'${ \rm (Eisenstein polynomial)}\\
\quad {\rm (i)} If Greenberg's conjecture is true for $p$ and $\psi$, 
$$
\begin{array}{lll}
X_\infty^{\psi} \simeq \varLambda/(f(T),p^{s-1}T^b,p^s), & &Y_\infty^{\psi} \simeq \varLambda/(f(T)), \\
X_\infty^{\psi^*} \simeq \varLambda/(f^*(T)) & 
\;{\rm and}\; & Y_\infty^{\psi^*} \simeq (T,p) \subset \varLambda \\
\end{array}
$$
\quad for some $b$, $s \in \mathbf{Z}_{\geq 1}$.\\
\quad {\rm (ii)} If Greenberg's conjecture is false for $p$ and $\psi$, 
$$
\begin{array}{lll}
X_\infty^{\psi} \simeq \varLambda/(f(T)), & & Y_\infty^{\psi} \simeq \varLambda/(f(T)), \\
X_\infty^{\psi^*} \simeq \varLambda/(f^*(T)) & \;{\rm and}\; & 
Y_\infty^{\psi^*} \simeq \varLambda/(f^*(T)) \oplus \varLambda. \\
\end{array}
$$
\end{remark}

\section{\large Algorithms and numerical results}

We use the notation in the previous sections. 
First, we briefly review algorithms for computing Iwasawa invariants 
and exceptional pairs of $\mathbf{Q}(\sqrt{d},\zeta_p)$. 

We compute the following arithmetic special elements (cf.~~\cite{Aoki-Fukuda}, \cite[\S2]{Sumida6}, \cite[\S5]{Sumida7} and [\S3]\cite{Sumida8}): \\
(I) the generalized Bernoulli numbers modulo $p$, i.e., $\sum_{k=0}^{p-2} B_{k,\chi}t^k/k! \md p$, \\
(II)$_n$ the Iwasawa polynomial $g_{\psi}(T) \mod  p^{n+1}$, \\
(III)$_n$ the special cyclotomic unit $(c_n^{\psi})^{Y_n(T)}$ modulo a prime ideal $\mathfrak{L}_n$, \\
(IV)$_n$ the Gauss sum $g_0(N_{K_n/K_0}\mathfrak{L}_n)^{\psi^*}$ modulo a prime ideal $\mathfrak{L}_0^*$, where 
$\mathfrak{L}_n$ (resp.~$\mathfrak{L}_n^*$) is a prime ideal above $l=1+\kappa f_n$ 
(resp.~$l^*=1+\kappa^*(2 f_n l)$) of $K_n$. \\

In (I), we use the Fast Fourier Transform algorithm for multiplication of polynomials (see \cite[\S4.3.3]{Knuth}). 
In (II)$_n$, we use the approximate formula for $L(s,\psi)$ \cite[Theorem 5.11]{Washington}. 
In (III)$_n$, we define $Y_n(T)\in \mathbf{Z}[T]$ by $g_\psi(T) \mod p^{n+1}$ and compute $c=(c_n^{\psi})^{Y_n(T)} \mod \mathfrak{L}_n$ 
for an auxiliary prime number $l$. We check a criterion for $c$. 
If the criterion is satisfied, we obtain $\lambda_p(\psi)=0$ (Greenberg's conjecture) 
and an upper bound of $\nu_p(\psi)$ (see \cite[\S2]{Ichimura-Sumida2}). 
In (IV)$_n$, we compute $g=g_0(N_{K_n/K_0}\mathfrak{L}_n)^{\psi^*} \mod \mathfrak{L}_0^*$ 
for auxiliary prime $l^*$ by using the Fast Fourier Transform algorithm once again. 
We check a criterion for $g$. If the criterion is satisfied, we obtain an lower bound of $\nu_p(\psi)$. 
Finally we obtain the exact value of $\nu_p(\psi)$ if Greenberg's conjecture hold for $p$ and $\psi$ 
(see \cite[\S3]{Sumida6}). 

In \cite{Sumida6, Sumida7, Sumida8, Sumida10, Sumida12}, 
by using the above special elements, we efficiently computed irregular pairs and exceptional pairs 
for $|d|<200$ and $p<1,\!000,\!000$. 
By further computation, we obtain the following.

\begin{proposition}
For $|d|<200$ and $1,\!000,\!000<p<2,\!000,\!000$, all exceptional pairs $(p,\chi \omega^k)$ 
are given in {\rm Table 1}.
\label{prop:1}
\end{proposition}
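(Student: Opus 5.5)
The proof is computational, and I would organize it as an exhaustive check over all pairs. The check uses the special elements (I)--(IV)$_n$ described above and decides, for each pair, which of the four conditions $[\nu]$, $[a_0]$, $[b_0]$, $[{\rm lmd}]$ holds.

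\emph{Step 1: irregular pairs.} For every fundamental discriminant $d$ with $|d|<200$ (together with $d=1$) and every prime $p$ with $1,\!000,\!000<p<2,\!000,\!000$ and $p\nmid f_\chi$, I compute (I), the generalized Bernoulli numbers $B_{k,\chi}$ modulo $p$, using FFT multiplication. The pair $(p,\chi\omega^k)$ is irregular exactly when $p \mid B_{k,\chi}$ with the appropriate parity, i.e. when $v_p(a_0)>0$, equivalently $v_p(b_0)>0$. Only for these pairs can any exceptional condition hold, since in Case [R] all invariants vanish.

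\emph{Step 2: the conditions $[a_0]$, $[b_0]$, $[{\rm lmd}]$.} For each irregular pair I compute (II)$_1$, the polynomial $g_\psi(T)$ modulo $p^{2}$, via the approximate formula \cite[Theorem 5.11]{Washington}. From it I read off:
\begin{itemize}
\item whether $v_p(a_0)=v_p(g_\psi(0))>1$, which is $[a_0]$;
\item whether $\tilde\lambda_p(\psi)>1$, which is $[{\rm lmd}]$, detected by the linear coefficient being divisible by $p$;
\item the analogous quantity for $b_0=G^*_\psi(0)$, which is $[b_0]$; this can be obtained from the same data through the relation between $G_\psi$ and $G^*_\psi$, or from $B_{k,\chi}$ modulo $p^2$.
\end{itemize}

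\emph{Step 3: the condition $[\nu]$.} The delicate part is deciding whether $A_0^{\psi}\neq\{0\}$, since this is not visible from $L$-values. I would use the cyclotomic-unit criterion (III)$_n$ with an auxiliary prime $l=1+\kappa f_n$. If $(c_n^\psi)^{Y_n(T)}$ is not a $p$-th power modulo $\mathfrak L_n$, then $\lambda_p(\psi)=0$ and we obtain an upper bound on $\nu_p(\psi)$, in particular $\nu_p(\psi)=0$ in the non-exceptional case. When the criterion shows that the class can be nontrivial, I would compute the Gauss sum (IV)$_n$ modulo $\mathfrak L_0^*$ to obtain a matching lower bound. Combining the two bounds gives the exact $\nu_p(\psi)$ and confirms $[\nu]$.

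\emph{Step 4: the table.} Every pair flagged in Steps 2--3 is recorded in Table 1. Every unflagged pair is certified non-exceptional, so the table is complete.

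The main obstacle is Step 3. The criterion in (III)$_n$ is only sufficient, so for each pair I must search over $\kappa$ until a suitable auxiliary prime $l$ succeeds. When it fails repeatedly, I would pass to larger $n$ and recompute $g_\psi \bmod p^{n+1}$. A secondary obstacle is the sheer computational cost for $p$ up to $2\cdot10^6$. I would handle it by running the FFT Bernoulli computation once per $(p,d)$, and by restricting the costly computations (II)--(IV) to the comparatively few irregular pairs.
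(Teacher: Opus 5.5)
Your plan is correct and matches the paper's own argument, which is an exhaustive machine computation with the special elements: (I) isolates the irregular pairs (regular ones fall into Case [R]), (II)$_n$ decides $[a_0]$, $[b_0]$ and $[{\rm lmd}]$, and (III)$_n$ and (IV)$_n$ give upper and lower bounds for $\nu_p(\psi)$. One small slip is that $[b_0]$ cannot be decided from $B_{k,\chi}$ modulo $p^2$ alone: $b_0\equiv L_p(1-k,\psi)-c_1^*\bigl((1+f_0)^{1-k}-1\bigr) \pmod{p^2}$, where $c_1^*$ is the linear coefficient of $G^*_\psi(T)$, and $c_1^*$ is a unit when $\tilde\lambda_p(\psi)=1$, so you should use your first route via $g_\psi(T)$ modulo $p^2$.
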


\newpage

\begin{center}
{Table 1: Exceptional pairs for $|d|<200$ and $1,\!000,\!000<p<2,\!000,\!000$.}
\begin{tabular}{|ccc|ccc|}\hline
 & [$\nu$] &  &  & [$a_0$] &  \\ \hline
$p$ & $k$ & $d$ & $p$ & $k$ & $d$ \\ \hline 
1166927	& 871940 & 145 &  1243093 & 1052719 & -183 \\
1262483	& 816621 & -167 &  1580483 & 878413 & -116 \\
1321927	& 1087373& -35 & & & \\
1378691	& 497941 & -167& & & \\
1485761	& 566272 & 73& & & \\
1798813	& 984380 & 97& & & \\

\hline
 & [$b_0$] &  &  & [lmd] &  \\ \hline
$p$ & $k$ & $d$ & $p$ & $k$ & $d$ \\ \hline 
1128233	& 191315 & -67 & 1402147 & 944163 &  -87 \\
1140949	& 611954 & 56 & 1442899	 & 1057203 & -151 \\
1752481	& 557278 & 24 & 1744817	 & 928867 & -3 \\
\hline
\end{tabular}

\end{center}

In Figure 1, we compare actual numbers of exceptional pairs 
in the range $f_\chi <200$ and $200< p < x$ 
with the expected number 
$$
E_1(x)=123 
{\displaystyle \sum_{200 < p < x} \frac{p-3}{2}\left(\frac{1}{p}\right)^2},
$$
where $123$ is the number of the trivial character $\chi^0$ and quadratic characters $\chi$ with $f_\chi <200$.

\begin{center}
{Figure 1: Actual numbers and the expected number $(200<p<2,\!000,\!000)$.}
\includegraphics[width=130mm]{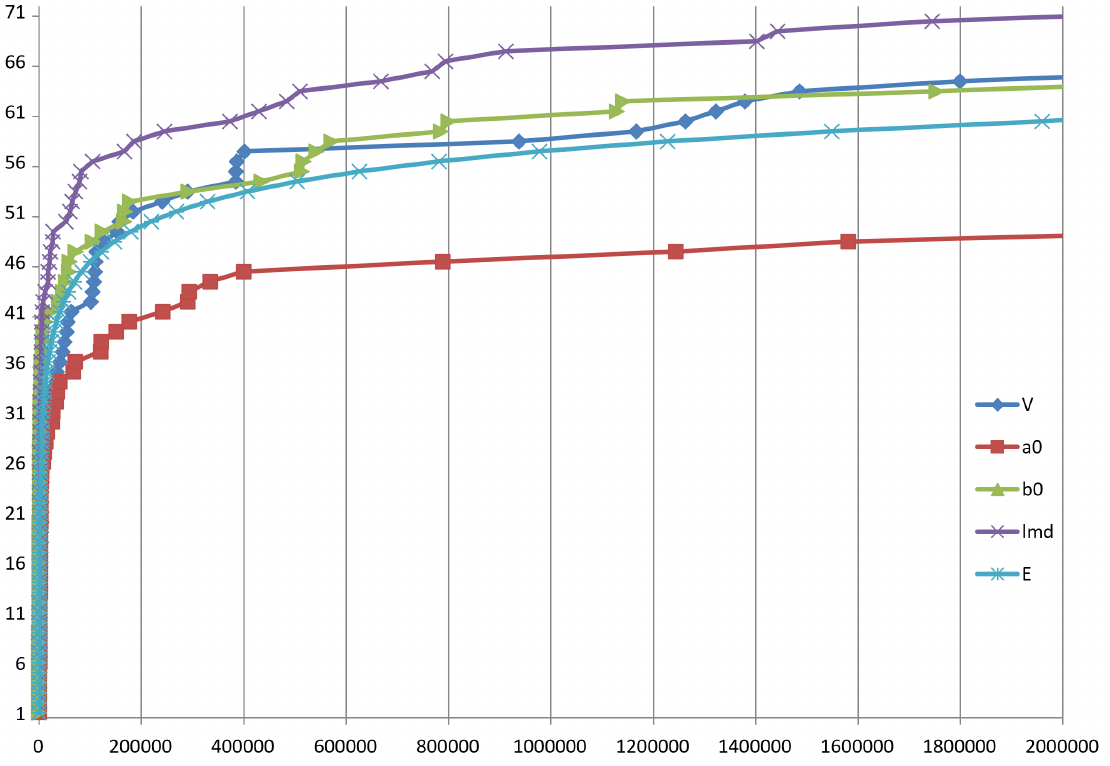}
\end{center}

In Figure 2, we compare actual numbers of exceptional pairs 
in the range $f_\chi <f$ and $200< p < 2,\!000,\!000$ 
with the expected number 
$$
E_2(n)=n {\displaystyle \sum_{200 < p < 2,000,000} \frac{p-3}{2}\left(\frac{1}{p}\right)^2},
$$
where $n=n(f)=\sharp \{\chi \, | \,\chi^2=\chi^0, \; f_\chi \leq f\}$. 
From our data, actual numbers still seem to be near the expected number.

\begin{center}
{Figure 2: Actual numbers and the expected number $(0\leq n(f) \leq 123)$.}
\includegraphics[width=130mm]{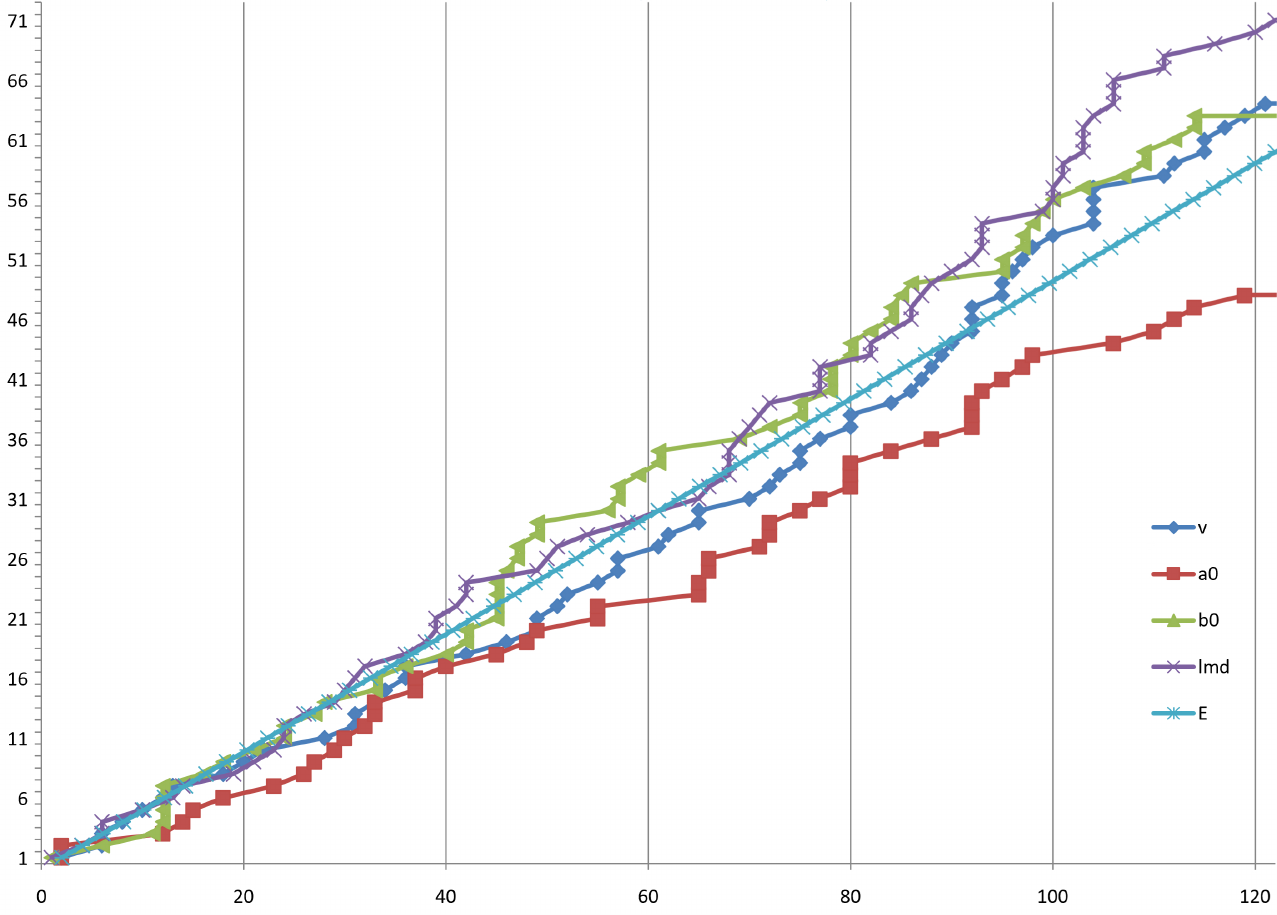}
\end{center}

In \cite{Sumida10, Sumida12}, we computed exceptional pairs 
for $|d|<10$ and $p<20,\!000,\!000$. 
The largest prime number among them is $5,\!911,\!877$ which was found in 2012 and 
satisfies the condition $[a_0]$ for $(k,d)=(1,\!629,\!992,5)$. 
By further computation, we obtain the following new exceptional pair which was found in 2024. 

\begin{proposition}
For $|d|<10$ and $6,\!000,\!000<p<30,\!000,\!000$, there is only one exceptional pair 
$(p,\chi \omega^k)$ which is given in {\rm Table 2}. 
\label{prop:2}
\end{proposition}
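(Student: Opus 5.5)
This is a computational statement, so the proof is an exhaustive verification using the algorithms (I)--(IV)$_n$ reviewed above. The plan is in four steps.

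\textbf{Step 1: the characters.} For $|d|<10$ the relevant characters are the trivial character ($d=1$) and the quadratic characters with $d\in\{-3,-4,5,-7,8,-8\}$. The case $d=1$ up to $2^{31}$ is already covered by the cited work of Buhler--Harvey and Hart. For each remaining $d$ and each prime $p$ with $6{,}000{,}000<p<30{,}000{,}000$ and $p\nmid f_\chi$, run step (I): compute $\sum_{k=0}^{p-2} B_{k,\chi}t^k/k! \bmod p$ by FFT polynomial multiplication. Its vanishing coefficients give all irregular pairs $(p,\chi\omega^k)$ with $\psi=\chi\omega^k$ even and $2\le k\le p-2$. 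Only for these pairs can any of $[\nu]$, $[a_0]$, $[b_0]$, [lmd] hold.

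\textbf{Step 2: the conditions $[a_0]$, $[b_0]$, [lmd].} For each irregular pair, run (II)$_1$ to get $g_\psi(T)\bmod p^2$. Since $g_\psi$ is distinguished, $\tilde\lambda_p(\psi)\ge 2$ exactly when the linear coefficient is divisible by $p$. The value $g_\psi(0)=a_0$ up to a unit, and the analogous computation for $G^*_\psi$ gives $b_0$. So a single computation modulo $p^2$ decides $[a_0]$, $[b_0]$ and [lmd]. As an independent check, the two valuations can be compared with Kummer-type congruences relating $a_0$, $b_0$ to $B_{k,\chi}$ and $B_{k+p-1,\chi}$.

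\textbf{Step 3: the condition $[\nu]$.} For each irregular pair not already exceptional, decide whether $A_0^{\psi}=0$. Use (III)$_0$: pick auxiliary primes $l=1+\kappa f_0$ and test whether $(c_0^\psi)^{Y_0(T)}$ is not a $p$-th power modulo $\mathfrak L_0$. This is the Ichimura--Sumida criterion, and it forces $A_0^\psi=0$, hence $\nu_p(\psi)=0$ and Greenberg's conjecture for that pair. Conversely, where this criterion never succeeds, use the Gauss-sum test (IV)$_0$ to certify a nonzero lower bound for $\nu_p(\psi)$, which proves $[\nu]$.

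\textbf{Step 4: conclusion and main obstacle.} Collecting the results should show that exactly one pair, $(28{,}679{,}999,\,2{,}284{,}521,\,-8)$, is exceptional. It is of type [lmd], with $\tilde\lambda_p=\lambda_p(\chi\omega^{1-k})=2$. The main obstacle is computational scale together with rigor. Step (I) needs FFT products of length about $3\times 10^7$ with exact arithmetic modulo $p$, so I would use multi-modular FFT with CRT reconstruction to rule out rounding errors. Step (III) must succeed for every non-exceptional irregular pair. If the first auxiliary prime fails, I would iterate over $\kappa$ until the criterion holds, and only fall back on (IV) when it persistently fails.
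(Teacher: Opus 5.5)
Your proposal is correct and follows the same route as the paper. The paper gives no proof beyond ``by further computation'' with the special elements (I)--(IV)$_n$ (Bernoulli numbers mod $p$ via FFT, $g_\psi(T) \bmod p^{n+1}$, cyclotomic units and Gauss sums modulo auxiliary primes), which is exactly what you describe, and citing the Buhler--Harvey and Hart--Harvey--Ong verification for $d=1$ matches how the paper treats that character.
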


\begin{center}
{Table 2: The exceptional pair for $|d|<10$ and $6,\!000,\!000<p<30,\!000,\!000$.}
\begin{tabular}{|ccc|}\hline
     & [lmd] &  \\ \hline
 $p$ & $k$ & $d$ \\ \hline 
 28679999 & 2284521 & -8 \\
\hline
\end{tabular}
\end{center}

\begin{proposition}
In the range $|d|<200\; ($resp. $|d|<10)$ and $200<p<2,\!000,\!000\; ($resp. $10<p<30,\!000,\!000)$, 
every exceptional pair $(p,\psi)$ satisfies only one of conditions 
$[\nu]$, $[a_0]$, $[b_0]$ and $[{\rm lmd}]$. 
\label{prop:3}
\end{proposition}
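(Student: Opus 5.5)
This is a computational statement, so the plan is to run a finite verification over the exceptional pairs already collected. Proposition \ref{prop:1} and Proposition \ref{prop:2} (Tables 1 and 2), together with the earlier tables of \cite{Sumida6, Sumida7, Sumida8, Sumida10, Sumida12} for $200<p<1,\!000,\!000$ (resp.~$p<6,\!000,\!000$), give the complete list of exceptional pairs in the stated ranges. For each listed pair $(p,\psi)$ with $\psi=\chi\omega^k$, I will certify the value of each of the four quantities $\sharp A_0^{\psi}$, $v_p(a_0)$, $v_p(b_0)$ and $\tilde{\lambda}_p(\psi)$. Then I check that exactly one of $[\nu]$, $[a_0]$, $[b_0]$, $[{\rm lmd}]$ holds.

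First I settle the three analytic conditions. Using (II)$_n$ with $n=1$, I compute $g_\psi(T) \bmod p^{2}$ from the approximate formula for $L(s,\psi)$. This determines $v_p(a_0)$ exactly when it is $1$, since $a_0=G_\psi(0)$ and $G_\psi=g_\psi u_\psi$ with $u_\psi(0)$ a unit. It also determines $\tilde{\lambda}_p(\psi)=\deg g_\psi$ whenever $v_p(g_\psi(0))=1$: in that case $g_\psi$ is Eisenstein-like modulo $p$, and the degree is read off from the reduction of $G_\psi(T) \bmod p$, i.e.\ from the order of vanishing mod $p$. The reduction mod $p$ comes from the Bernoulli data (I). The same procedure applied to $G^*_\psi$ gives $v_p(b_0)$. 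Since $\deg g_\psi=\deg g^*_\psi$, the quantity $\tilde{\lambda}_p(\psi)$ is checked twice, which serves as a consistency test.

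Next I settle condition $[\nu]$. For pairs not listed under $[\nu]$, I use the criterion from (III)$_0$: the cyclotomic unit $(c_0^\psi)^{Y_0(T)} \bmod \mathfrak{L}_0$ for a suitable auxiliary prime $l$ shows $A_0^\psi=\{0\}$ via \cite[\S2]{Ichimura-Sumida2}. For pairs listed under $[\nu]$, the Gauss sum criterion (IV)$_0$ gives a nonzero lower bound, so $A_0^\psi\neq\{0\}$. For those $[\nu]$ pairs I also verify that $v_p(a_0)=v_p(b_0)=1$ and $\tilde{\lambda}_p(\psi)=1$. This is consistent with Case $[\nu]$(i), where $Y_\infty^\psi\simeq\varLambda/(T-\alpha)$ with $\alpha\in p\mathbf{Z}_p\setminus p^2\mathbf{Z}_p$.

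The main obstacle is the $[\nu]$ check for pairs in the other three cases: the auxiliary prime search in (III)$_0$ must succeed for each of them. If the criterion fails for a first choice of $l=1+\kappa f_0$, I would simply iterate over further $\kappa$ until it succeeds, which in practice happens quickly. A secondary difficulty is the large pair $(28,\!679,\!999,\;2,\!284,\!521,-8)$. There I must confirm both that $\tilde{\lambda}_p=2$ with $v_p(a_0)=v_p(b_0)=1$, and that $A_0^\psi$ is trivial, which requires the costly FFT-based computations at this size. Once every pair has all four values certified, the proposition follows by inspection.
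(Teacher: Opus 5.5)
Your overall strategy is the same finite certification the paper relies on: run through the tabulated exceptional pairs and determine $A_0^\psi$, $v_p(a_0)$, $v_p(b_0)$ and $\tilde{\lambda}_p(\psi)$ for each, using the special elements (I)--(IV). The gap is in how you certify $\tilde{\lambda}_p(\psi)$: as justified, that step fails. The Bernoulli data (I) do \emph{not} give $G_\psi(T)\bmod p$. For $\psi=\chi\omega^k$, the only value of $G_\psi$ they see is $G_\psi(T_k)=-(1-\chi(p)p^{k-1})B_{k,\chi}/k$ with $T_k=(1+f_0)^k-1\in p\mathbf{Z}_p$. That value is $\equiv G_\psi(0)\pmod p$, and the same is true of every $T_m$ with $m\equiv k \bmod p-1$. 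So (I) detects irregularity but cannot distinguish $\tilde{\lambda}_p(\psi)=1$ from $\tilde{\lambda}_p(\psi)\ge 2$. That distinction is exactly condition $[{\rm lmd}]$, the one attached to the pair with $p=28,\!679,\!999$. The linear coefficient $c_1$ of $G_\psi$ lives at precision $p^2$. Since $v_p(T_{k+p-1}-T_k)=1$, you have $G_\psi(T_{k+p-1})-G_\psi(T_k)\equiv c_1(T_{k+p-1}-T_k)\pmod{p^2}$. Hence $[{\rm lmd}]$ holds iff $(1-\chi(p)p^{m-1})B_{m,\chi}/m$ takes the same value mod $p^2$ for $m=k$ and $m=k+p-1$. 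The repair is to read $\tilde{\lambda}_p(\psi)$ off the $p$-adic computation (II)$_1$ itself, which has to locate the first unit coefficient of $G_\psi$ anyway in order to produce $g_\psi\bmod p^2$. Your proviso ``whenever $v_p(g_\psi(0))=1$'' is irrelevant. Every distinguished polynomial is $\equiv T^{\lambda}\bmod p$, so the Weierstrass degree is always the order of vanishing of $G_\psi\bmod p$. As written, the proviso leaves $\tilde{\lambda}_p(\psi)$ uncertified for the $[a_0]$-pairs. It also helps to use $b_0=G_\psi(f_0)$ with $v_p(f_0)=1$. If $\tilde{\lambda}_p(\psi)=1$, then $[a_0]$ and $[b_0]$ exclude each other. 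If $\tilde{\lambda}_p(\psi)\ge 2$, then $g_\psi(f_0)\equiv g_\psi(0)\pmod{p^2}$, so $[a_0]\iff[b_0]$.

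For $[\nu]$, note that $\gamma$ acts trivially on $K_0$, so $(c_0^\psi)^{Y_0(T)}=(c_0^\psi)^{Y_0(0)}$. This certifies nothing when $p\mid Y_0(0)$ (for instance if $Y_0\equiv g_\psi\bmod p$); otherwise it is just the test on $c_0^\psi$ itself. That test is the right one. We have $(E_0\otimes\mathbf{Z}_p)^\psi\simeq\mathbf{Z}_p$, and its index over the cyclotomic $\psi$-part equals $\sharp A_0^\psi$. So $A_0^\psi=\{0\}$ iff $c_0^\psi$ is not a $p$-th power, and one degree-one prime $\mathfrak{L}_0$ at which it is not a $p$-th power residue certifies this. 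If $A_0^\psi=\{0\}$, Chebotarev in $K_0(\sqrt[p]{c_0^\psi})/K_0$ gives such primes with density $1-1/p$. So the auxiliary-prime search you call the main obstacle is not one. The genuinely one-sided direction is $A_0^\psi\neq\{0\}$ for the $[\nu]$-pairs: reductions modulo primes cannot certify that an element is a $p$-th power, which is why the Gauss-sum lower bound (IV) is needed there, as you say.
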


For details of computation, see \cite{Sumida12}. 
The programs and further data have been available in our web page: 
https://hiro2.pm.tokushima-u.ac.jp/\~{}hiroki/\\
major/galois1-e.html. 
These data were obtained by seven personal computers with 76 cores (152 threads) for about five years. 
We computed twice for $|d|<200$ and $p<2,000,000$, and only once for $|d|<10$ and $p<30,000,000$. 

Though temporary hardware errors are rare, they could occur in long-term computation. 
We are afraid that they occur in the latter computation. 
However, these rare errors would not affect Proposition \ref{prop:2}, 
because the expected number is very small. 
In fact, 
$$
\sum_{x_0 < p <x_1} \dfrac{p-3}{2} \left(\dfrac{1}{p}\right)^2 
\approx \frac{1}{2} (\log \log 30,\!000,\!000-\log \log 6,\!000,\!000)=0.04907 \cdots 
$$
for $x_0=6,\!000,\!000$  and $x_1=30,\!000,\!000$. 
It might be hard to find a new exceptional pair for $|d|<10$, because 
$$
\sum_{x_0 < p <x_1} \dfrac{p-3}{2} \left(\dfrac{1}{p}\right)^2 \approx 
\frac{1}{2} (\log \log 100,\!000,\!000-\log \log 30,\!000,\!000)=0.03379 \cdots
$$
for $x_0=30,\!000,\!000$ and $x_1=100,\!000,\!000$.

\providecommand{\bysame}{\leavevmode\hbox to3em{\hrulefill}\thinspace}
\providecommand{\MR}{\relax\ifhmode\unskip\space\fi MR }
\providecommand{\MRhref}[2]{%
  \href{http://www.ams.org/mathscinet-getitem?mr=#1}{#2}
}
\providecommand{\href}[2]{#2}




\begin{thebibliography}{10}

\bibitem{Aoki-Fukuda}
M.~Aoki and T.~Fukuda, \emph{An algorithm for computing $p$-class groups of
  abelian number fields}, Lecture Notes in Computer Science \textbf{4076}
  (2006), 56--71.

\bibitem{Buhler1}
J.~Buhler, R.~Crandall, R.~Ernvall, T.~Mets{\"a}nkyl{\"a}, and A.~M.
  Shokrollahi, \emph{Irregular primes and cyclotomic invariants to $12$
  million}, J. Symbolic Comput. \textbf{31} (2001), 89--96.

\bibitem{Buhler2}
J.~Buhler and D.~Harvey, \emph{Irregular primes to 163 million}, Math. Comp.
  \textbf{80} (2011), 2435--2444.

\bibitem{Ferrero-Washington}
B.~Ferrero and L.~Washington, \emph{The {Iwasawa} invariant $\mu_p$ vanishes
  for abelian number fields}, Ann. of Math. \textbf{109} (1979), 377--395.

\bibitem{Franks}
C.~Franks, \emph{Classifying {$\Lambda$}-modules up to isomorphism and
  application to {Iwasawa} theory}, Ph D dissertation, Arizona State University
  (2011).

\bibitem{Greither}
C.~Greither, \emph{Class groups of abelian fields, and the main conjecture},
  Ann. Inst. Fourier $($Grenoble$)$ \textbf{42} (1992), 449--499.

\bibitem{Hart}
W.~Hart, D.~Harvey, and W.~Ong, \emph{Irregular primes to two billion}, Math.
  Comp. \textbf{86} (2017), 3031--3049.

\bibitem{Ichimura-Sumida2}
H.~Ichimura and H.~Sumida, \emph{On the {Iwasawa} invariants of certain real
  abelian fields {II}}, Internat. J. Math. \textbf{7} (1996), 721--744.

\bibitem{Iwasawa2}
K.~{Iwasawa}, \emph{Lectures on $p$-adic {L}-functions}, Ann. of Math. Stud.,
  vol.~74, Princeton Univ. Press: Princeton, N.J., 1972.

\bibitem{Iwasawa3}
\bysame, \emph{On {$\mathbf{Z}_l$}-extensions of algebraic number fields}, Ann.
  of Math.,(2) \textbf{98} (1973), 246--326.

\bibitem{Knuth}
E.~Knuth, \emph{The art of computer programming, vol. $2$: Seminumerical
  algorithms. $2$nd edition}, Addison-Wesley Publishing Co., Reading, Mass,
  1981.

\bibitem{Koike}
M.~Koike, \emph{On the isomorphism classes of {Iwasawa} modules associated to
  imaginary quadratic fields with $\lambda=2$}, J. Math. Sci. Univ. Tokyo
  \textbf{6} (1999), 371--396.

\bibitem{Kubota-Leopoldt}
T.~Kubota and H.W. Leopoldt, \emph{Eine $p$-adische {Theorie} der {Zetawerte},
  {I}. {Einf\"uhrung} der $p$-adischen {Dirichletschen $L$-Funktionen}}, J.
  reine angew. Math. \textbf{214/215} (1964), 328--339.

\bibitem{Mazur-Wiles}
B.~Mazur and A.~Wiles, \emph{Class fields of abelian extensions of
  $\mathbf{Q}$}, Invent. Math. \textbf{76} (1984), 179--330.

\bibitem{Murakami0}
K.~Murakami, \emph{On the isomorphism classes of {Iwasawa} modules with
  {{\(\lambda = 3\)}} and {{\(\mu = 0\)}}}, Osaka J. Math. \textbf{51} (2014),
  no.~4, 829--865 (English).

\bibitem{Murakami1}
\bysame, \emph{Isomorphism classes of modules over {Iwasawa} algebra with
  {{\(\lambda =4\)}}}, Tokyo J. Math. \textbf{39} (2016), no.~1, 101--132
  (English).

\bibitem{Murakami2}
\bysame, \emph{On a new invariant determining the isomorphism classes of
  {{\(\Lambda\)}}-modules with $\lambda = 3$}, Manuscr. Math. \textbf{164}
  (2021), 409--430.

\bibitem{Sumida}
H.~Sumida, \emph{Greenberg's conjecture and the {Iwasawa} polynomial}, J. Math.
  Soc. Japan \textbf{49} (1997), 689--711.

\bibitem{Sumida6}
H.~Sumida-Takahashi, \emph{Computation of {Iwasawa} invariants of certain real
  abelian fields}, J. Number Theory \textbf{105} (2004), 235--250.

\bibitem{Sumida7}
\bysame, \emph{The {Iwasawa} invariants and the higher {$K$-groups} associated
  to real quadratic fields}, Exp. Math. \textbf{14} (2005), 307--316.

\bibitem{Sumida8}
\bysame, \emph{Computation of the $p$-part of the ideal class group of certain
  real abelian fields}, Math. Comp. \textbf{76} (2007), 1059--1071.

\bibitem{Sumida10}
\bysame, \emph{Examples of the {Iwasawa} invariants and the higher {$K$-groups}
  associated to quadratic fields}, J. Math. Univ. Tokushima \textbf{41} (2007),
  33--41.

\bibitem{Sumida12}
\bysame, \emph{A generalized problem associated to the {Kummer-Vandiver}
  conjecture}, Arnold Math. J. \textbf{9} (2023), 381--391.

\bibitem{Sumida11}
H.~Sumida-Takahashi, N.~Furuya, and K.~Kitano, \emph{On the $l$-part of the
  class groups of imaginary cyclic fields of conductor $p$ and degree $2l^n$},
  J. Math. Tokushima Univ. \textbf{56} (2022), 1--10.

\bibitem{Sumida13}
\bysame, \emph{Greenberg's generalized conjecture and pairings of $p$-units in
  the $4p$-cyclotomic field}, J. Ramanujan Math, Soc. \textbf{39} (2024),
  79--90.

\bibitem{Washington}
L.~Washington, \emph{Introduction to cyclotomic fields. second edition},
  Graduate Texts in Math., vol.~83, Springer-Verlag: New York, 1997.

\end{thebibliography}
%

\end{document}